\def\rank{\mbox{\rm rank\,}}
\def\Nul{\mbox{\rm Nul}}
\def\res{\mbox{\,|\,}}
\def\And{\mbox{\rm ~and~}}
\def\For{\mbox{\rm ~for~}}
\def\Col{\mbox{\rm Col\,}}
\def\Par{\mbox{\rm Par}}
\def\sst{\scriptscriptstyle}
\def\de{\delta\hspace{-.3mm}}
\def\Cup{\textstyle\bigcup\limits}
\def\ccup{\textstyle\bigcup}
\def\Cap{\textstyle\bigcap\limits}
\def\Sum{\textstyle\sum\limits}
\def\({\mbox{\rm (}}\def\){\mbox{\rm )}}
\theoremstyle{plain}
\newtheorem{theorem}{Theorem}[section]
\newtheorem{lemma}[theorem]{Lemma}
\newtheorem{proposition}[theorem]{Proposition}
\newtheorem{example}[theorem]{Example}
\newtheorem{corollary}[theorem]{Corollary}
\begin{document}
%\small
\normalsize
%\large
%\huge
\title{Parallel Translates of Represented Matroids}

\author{Beifang Chen\\
\small Department of Mathematics\\
\small Hong Kong University of Science and
Technology\\
\small Clear Water Bay, Hong Kong\\
\small mabfchen@ust.hk
\and
Houshan Fu,
Suijie Wang\thanks{Supported by the National Natural Science Foundation of China (Grant No. 11871204)}\\
\small School of Mathematics\\
\small Hunan University, Changsha, China\\
\small fuhoushan@hnu.edu.cn, wangsuijie@hnu.edu.cn}
\date{}

\maketitle
%\linenumbers

\begin{abstract}
Given an $\Bbb{F}$-represented matroid $(M,\rho)$ with the ground set $[m]$, the representation $\rho$ naturally defines a hyperplane arrangement $\mathcal{A}_\rho$. We will study its parallel translates $\mathcal{A}_{\rho,{\bm g}}$ of $\mathcal{A}_\rho$ for all ${\bm g}\in \Bbb{F}^m$. Its intersection semi-lattices $L(\mathcal{A}_{\rho,{\bm g}})$ and the characteristic polynomials $\chi(\mathcal{A}_{\rho,{\bm g}},t)$ will be classified by the intersection lattice of the derived arrangement $\mathcal{A}_{\de\rho}$, which is a hyperplane arrangement associated with the derived matroid $(\de M,\de\rho)$ and  also known as the discriminantal arrangement in the literature. As a byproduct, we obtain a comparison result and a decomposition formula on the characteristic polynomials $\chi(\mathcal{A}_{\rho,{\bm g}},t)$. \vspace{2ex}

\noindent{\small {\bf Keywords:} Derived arrangement, discriminantal arrangement, derived matroid,  characteristic polynomial,  broken-circuit theorem}
\end{abstract}

\section{Introduction}

Our matroid terminology and notation will follow Oxley \cite{Oxleybook}. For a field ${\Bbb F}$, let $M$ be an $\Bbb{F}$-represented matroid of rank $r$ with the ground set $[m]=\{1,2,\ldots,m\}$, and let $\rho:[m]\rightarrow{\Bbb F}^n$ be a matroid representation of $M$ and write $\rho(i)={\bm\rho}_i$, $i\in[m]$. Denote by the pair $(M,\rho)$ an ${\mathbb F}$-represented matroid. Associated with $(M,\rho)$, the hyperplane arrangement $\mathcal{A}_{\rho}$ consists of hyperplanes $H_i:{\bm\rho}_i{\bm x}=0$ for all $i\in [m]$. We are interested in classifying all parallel translates $\mathcal{A}_{\rho,{\bm g}}=\{H_{{\bm\rho}_1,g_1},\ldots, H_{{\bm\rho}_m,g_m}\}$ of $\mathcal{A}_{\rho}$, where ${\bm g}=(g_1,\ldots,g_m)$ is a vector in ${\Bbb F}^m$ and the hyperplanes $H_{{\bm\rho}_i,g_i}$ are defined by the equations
\begin{equation}\label{Translation-Eq}
H_{{\bm\rho}_i,g_i}:\quad {\bm\rho}_i{\bm x}=g_i, \quad 1\leq i\leq
m.
\end{equation}
A vector is understood as either a row vector or a column vector, depending on its meaning in the context. Our motivation is the work of Athanasiadis \cite{Athanasiadis2010} on deformations of hyperplane arrangements and the work of Oxley-Wang \cite{Oxley-Wang} on derived matroids.

Given a hyperplane arrangement $\mathcal{A}$ in a vector space $V$. The {\bf characteristic polynomial} of $\mathcal{A}$ is
\begin{equation*}
\chi(\mathcal{A},t):=\sum_{X\in L(\mathcal{A})}\mu(V,X)\,t^{\dim X},
\end{equation*}
where $L(\mathcal{A})$ is the intersection semi-lattice whose members are all possible nonempty intersections of hyperplanes of $\mathcal{A}$, including $V$, ordered by the reverse of set inclusion, and $\mu$ is the M\"{o}bius function on $L(\mathcal{A})$; see \cite{Orlik,Stanley1,Stanley2}. Our classification about the parallel tranlates $\mathcal{A}_{\rho,{\bm g}}$ with ${\bm g}\in{\Bbb F}^m$ is to classify the semi-lattices $L(\mathcal{A}_{\rho,{\bm g}})$ and the characteristic polynomials $\chi(\mathcal{A}_{\rho,{\bm g}},t)$. When ${\bm g}={\bm 0}$, denote $\mathcal{A}_{\rho}=\mathcal{A}_{\rho,{\bm 0}}$
whose characteristic polynomial $\chi(\mathcal{A}_{\rho},t)$ is the characteristic polynomial $\chi(M,t)$ of the matroid $M$ (see \cite{Rota,Welsh,Stanley2}). So $\mathcal{A}_{\rho,{\bm g}}$ is a parallel translate of  $\mathcal{A}_{\rho}$.

Rota first introduced the concept of derived matroids to investigate ``dependencies among dependencies" of matroids at the Bowdoin College Summer 1971 NSF Conference on Combinatorics.  In 1980, Longyear \cite{Longyear1980} studied the derived matroids $\de M$ when $M$ is $\Bbb{F}_2$-represented. For any field $\Bbb{F}$, Oxley and Wang \cite{Oxley-Wang} gave a specific definition of the derived matroid of an $\Bbb{F}$-represented matroid $(M,\rho)$. Let  $\mathscr{C}(M)$ denote the set of all circuits of $M$. The derived matroid  $(\de M,\de\rho)$ is an ${\mathbb F}$-represented matroid of rank $r(\de M)=m-r(M)$ with the ground set $\mathscr{C}(M)$ such that
\[
(\de\rho)(I) = {\bf c}_{\sst I}\quad \For \quad I\in \mathscr{C}(M),
\]
where the {\bf circuit vector} ${\bf c}_{\sst I}= (c_1,c_2,\dots,c_m)$ in ${\mathbb F}^m$ is unique (up to a non-zero scalar multiple) and defined by  $\sum_{i = 1}^m c_i {\bm \rho}_i = 0$, where  $c_i \neq 0$ if and only if $i \in I$.  Similar as $\mathcal{A}_{\rho}$, the hyperplane arrangement $\mathcal{A}_{\de\rho}$ in $\Bbb{F}^m$ is defined by
\begin{equation}\label{derivedarrangement}
\mathcal{A}_{\de\rho}=\{H_{\sst I}\mid I\in \mathscr{C}(M)\},\quad\quad {\rm where} \;\; H_{\sst I}: \; {\bm c}_{\sst I}{\bm x}=0
\end{equation}
and called {\bf the  derived arrangement} of $(M,\rho)$.

In 1989, Manin and Schechtman \cite{Manin-Schechtman1989} introduced the discriminantal arrangement to characterize the general position parallel translates of an affine hyperplane arrangement in general position, known as Manin-Schechtman arrangement. In 1994, Falk \cite{Falk1994} shown that neither combinatorial nor topological structure of the discriminantal arrangement is independent of the original arrangement, that is, in matroid language, the derived matroid $(\de M,\de\rho)$ depends on the representation $\rho$ of the original matroid $M$, confirmed in \cite[page 4-6]{Oxley-Wang}. In 1997, Bayer and Brandt \cite{Bayer-Brandt1997} studied the discriminantal arrangments without general position assumptions on the original arrangement. It is easily seen from \cite[Theorem 2.4]{Bayer-Brandt1997} that the discriminantal arrangement is exactly the derived arrangement defined above. Please see \cite{Athanasiadis1999, Libgober-Settepanella2018, Sawada-Settepanella-Yamagata2017} 
for more study on discriminantal arrangements.

For each $X\in L(\mathcal{A}_{\de\rho})$, consider the restriction of
$\mathcal{A}_{\de\rho}$ to $X$, denoted $\mathcal{A}_{\de\rho}/X$, which is a hyperplane arrangement in $X$ consisting of the hyperplanes $H_{\sst I}\cap X$ of $X$, where
$H_{\sst I}\in\mathcal{A}_{\de\rho}$ and $X\not\subseteq H_{\sst I}$. Let $M(\mathcal{A}_{\de\rho}/X)=X\setminus \ccup_{{\scriptscriptstyle H_{\sst I}:X\nsubseteq H_{\sst I}\in \mathcal{A}_{\de\rho}}}H $ be the complement of $\mathcal{A}_{\de\rho}/X$. We have the
disjoint decomposition \cite[page 410]{Stanley2}
\begin{equation}\label{DecompositionRho}
{\Bbb F}^m=\bigsqcup_{X\in L(\mathcal{A}_{\de\rho})}
M(\mathcal{A}_{\de\rho}/X).
\end{equation}
Indeed, for each ${\bm x}\in {\Bbb F}^m$, assuming $X=\bigcap_{{\bm x}\in H_{\sst I}\in \mathcal{A}_{\de\rho}} H_{\sst I}$, we have ${\bm x}\in M(\mathcal{A}_{\de\rho}/X)$. Below are our main results.
\begin{theorem}[Geometric Characterization]\label{GCT}
If vectors ${\bm g},{\bm h}\in  M(\mathcal{A}_{\de\rho}/X)$ for some  $X\in L(\mathcal{A}_{\de\rho})$, then
\begin{equation*}\label{geometric characterization}
L(\mathcal{A}_{\rho,{\bm g}})\cong L(\mathcal{A}_{\rho,{\bm h}})\quad \And\quad
\chi(\mathcal{A}_{\rho,{\bm g}},t)=\chi(\mathcal{A}_{\rho,{\bm h}},t).
\end{equation*}
\end{theorem}
When ${\bm g}\in M(\mathcal{A}_{\de\rho})$, the arrangement $\mathcal{A}_{\rho,{\bm g}}$ is in general position, exactly the same as the arrangement studied by Bayer and Brandt \cite{Bayer-Brandt1997}.
Theorem \ref{GCT} says that $\chi(\mathcal{A}_{\rho,{\bm g}},t)$ are the same polynomial for all ${\bm
g}\in M(\mathcal{A}_{\de\rho}/X)$, denoted by $\chi(X,t)$. For
convenience let us write
\begin{eqnarray}
\chi(\mathcal{A}_{\rho,{\bm g}},t)=\chi(X,t)= \sum_{k=0}^r (-1)^k a_k(X) t^{n-k},\quad {\bm
g}\in M(\mathcal{A}_{\de\rho}/X). \label{Defn-Chi-X}
\end{eqnarray}

\begin{theorem}[Uniform Comparison Theorem]\label{comparison}
Under the notations of \eqref{Defn-Chi-X}, if
$X\subseteq Y$ for $X,Y\in L(\mathcal{A}_{\de\rho})$, then
\begin{equation*}
a_k(X)\leq a_k(Y), \quad 0\leq k\leq r.
\end{equation*}
\end{theorem}

\begin{theorem}[Decomposition Formula]\label{decomposition}
Under the notations of \eqref{Defn-Chi-X}, if $\Bbb F$ is a finite field of $q$ elements,
then
\begin{equation*}
\sum_{X\in L(\mathcal{A}_{\de\rho})} \chi(X,q)
\chi(\mathcal{A}_{\de\rho}/X,q)=q^n(q-1)^m.
\end{equation*}
If $\Bbb F$ is an infinite field, then
\begin{equation*}\label{Poly-Id}
\sum_{X\in L(\mathcal{A}_{\de\rho})} \chi(X,t)
\chi(\mathcal{A}_{\de\rho}/X,t)=t^n(t-1)^m.
\end{equation*}
\end{theorem}
To end this section, we give a small example to illustrate the above theorems.
\begin{example}
Consider the uniform matroid $U_{2,4}$ represented over $\Bbb{R}$ by \[{\bm\rho_1}=(1,0),\;{\bm\rho_2}=(0,1),\;{\bm\rho_3}=(1,1),\; {\bm\rho_4}=(1,-1).\]
The associated arrangement $\mathcal{A}_{\rho}$ consists of hyperplanes $H_i:{\bm \rho}_i{\bm x}=0$ in $\Bbb{R}^2$.
For convenience we assume that the ground set of $U_{2,4}$ is $E=\{{\bm\rho_i}\mid i=1,2,3,4\}$. It is obvious that for each $i$, $I_i=E\smallsetminus \{{\bm\rho_i}\}$ is a circuit. The corresponding circuit vectors are
\[
{\bm c}_{\sst I_1}=(0,2,-1,1),\;{\bm c}_{\sst I_2}=(2,0,-1,-1),\;{\bm c}_{\sst I_3}=(1,-1,0,-1),\;{\bm  c}_{\sst I_4}=(1,1,-1,0),
\]
and the derived arrangement is $\mathcal{A}_{\delta\rho}=\{H_{\sst I_i}: {\bm c}_{\sst I_i}{\bm x}=0\mid i=1,2,3,4\}$ in $\Bbb{R}^4$. The intersection lattice $L(\mathcal{A_{\delta\rho}})$ consists of flats
$V=\Bbb{R}^4,H_{\sst I_1}, H_{\sst I_2},H_{\sst I_3},H_{\sst I_4}$, $W=\Cap_{i=1}^4 H_{\sst I_i}$ with $W\subseteq H_{\sst I_i}\subseteq V$. By routine calculations, we can obtain the characteristic polynomials
\[
\chi(\mathcal{A}_{\delta\rho}/V,t)=t^4-4t^3+3t^2,\; \chi(\mathcal{A}_{\delta\rho}/H_{\sst I_i},t)=t^3-t^2,\; \chi(\mathcal{A}_{\delta\rho}/W,t)=t^2,
\]
and
\[
\chi(V,t)=t^2-4t+6,\; \chi(H_{\sst I_i},t)=t^2-4t+5,\;\chi(W,t)=t^2-4t+3.
\]
As Theorem \ref{comparison} and \ref{decomposition}, we have the coefficient comparison $a_k(W)\le a_k(H_{\sst I_i}) \le a_k(V)$ for $k=0,1,2,i=1,2,3,4$, and the decomposition formula
\[
\sum_{X\in L(\mathcal{A}_{\de\rho})} \chi(X,t)
\chi(\mathcal{A}_{\de\rho}/X,t)=t^2(t-1)^4.
\]
\end{example}

\section{Proofs of Main Results}

Let $\mathcal{A}=\{H_{1},\ldots,H_{m}\}$ be a hyperplane arrangement
in an $n$-dimensional vector space $V$. A subset $J\subseteq[m]$ is
said to be {\bf affine independent} (with respect to $\mathcal{A}$)
if $\cap_{j\in J}H_j\neq \emptyset$ and
\[
r(\cap_{j\in J}H_j):=n-\dim(\mbox{$\cap$}_{j\in J}H_j)=|J|.
\]
Likewise, a subset $J$ of $[m]$ is said to be {\bf affine dependent}
if $\cap_{j\in J}H_j\neq \emptyset$ and $r(\cap_{j\in J}H_j)<|J|$.
Subsets $J\subseteq[m]$ with $\cap_{j\in J}H_j=\emptyset$ are
irrelevant to affine independence and affine dependence. An {\bf
affine circuit} is a minimal affine dependent subset $I$ of $[m]$,
that is, $I$ is affine dependent and any proper subset of $I$ is
affine independent. It is easy to see that a subset $I$ of $[m]$ is
an affine circuit if and only if $\cap_{i\in I}H_i\neq\emptyset$ and
for each $i_0\in I$,
\begin{equation*}
r(\cap_{i\in I}H_i)=r(\cap_{i\in I\smallsetminus i_0}H_i)=|I|-1.
\end{equation*}
Given a total order $\prec$ on $[m]$. An {\bf affine broken circuit}
is a subset of $[m]$ obtained from an affine circuit by removing its
maximal element under the total order $\prec$. A subset $J$ of $[m]$
is called an {\bf affine NBC} (no-broken-circuit) if $\cap_{j\in
J}H_j\neq \emptyset$ and $J$ contains no affine broken circuits. Of
course all affine NBC subsets are affine independent. The affine NBC
sets are just the $\chi$-independent sets in \cite[p.72]{Orlik}.

\begin{theorem}[Affine Broken Circuit Theorem\cite{Orlik}]\label{ABC}
Let $r=\rank(\mathcal{A})$ and write $\chi(\mathcal{A},t)$ as
\[
\chi(\mathcal{A},t)=a_0t^n-a_1t^{n-1}+\cdots + (-1)^ra_rt^{n-r}.
\]
Then the coefficient $a_k$ equals the number of affine NBC
$k$-subsets with respect to $\mathcal{A}$, $0\le k\le r$.
\end{theorem}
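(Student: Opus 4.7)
My plan is to prove the theorem via a sign-reversing involution applied to a Whitney-type expansion of $\chi(\mathcal{A},t)$, adapted to the affine intersection semi-lattice $L(\mathcal{A})$.

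First, I would establish the affine Whitney expansion
\[
\chi(\mathcal{A},t)=\sum_{\substack{J\subseteq[m]\\ \bigcap_{j\in J}H_j\neq\emptyset}}(-1)^{|J|}\,t^{\,n-r(\bigcap_{j\in J}H_j)}.
\]
Grouping the summands by $X=\bigcap_{j\in J}H_j\in L(\mathcal{A})$ reduces this to the identity
\[
\mu(V,X)=\sum_{J:\,\bigcap_{j\in J}H_j=X}(-1)^{|J|},
\]
which follows from the isomorphism between the interval $[V,X]\subseteq L(\mathcal{A})$ and the lattice of flats of the central sub-arrangement $\mathcal{A}_X=\{H\in\mathcal{A}:X\subseteq H\}$, together with the standard Whitney/M\"obius identity for central arrangements.

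Next, I would partition the indexing set of the expansion into three classes: \textbf{(I)} affine NBC subsets $J$; \textbf{(II)} affine independent subsets $J$ containing some affine broken circuit; \textbf{(III)} affine dependent subsets $J$ with $\bigcap_{j\in J}H_j\neq\emptyset$. Class (I) contributes precisely $\sum_k(-1)^k a_k\,t^{n-k}$ with the $a_k$ defined in the theorem, so it suffices to cancel the contributions from classes (II) and (III). To this end, I would construct a sign-reversing involution $\varphi$ on $\mathrm{(II)}\cup\mathrm{(III)}$ that preserves $\bigcap_{j\in J}H_j$ (hence the exponent of $t$) while toggling $|J|$ by one. For each such $J$, let $\sigma(J)\in[m]$ denote the $\prec$-minimum element $e$ for which there exists an affine circuit $C\subseteq J\cup\{e\}$ with $e=\max_{\prec}C$ and $C\setminus\{e\}\subseteq J$; then set $\varphi(J)=J\triangle\{\sigma(J)\}$. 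The required verifications are: (a) $\sigma(J)$ exists for every $J$ in $\mathrm{(II)}\cup\mathrm{(III)}$, using the broken circuit in class (II) and the affine dependence in class (III); (b) $\sigma(\varphi(J))=\sigma(J)$, so $\varphi^2=\mathrm{id}$; and (c) toggling $\sigma(J)$ leaves both the nonemptyness and the codimension of $\bigcap_{j\in J}H_j$ unchanged.

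The main obstacle will be verification (c): in the central matroid setting, toggling a pivot element never changes the matroid closure, but in the affine setting one must guarantee that $\bigcap_{j\in J}H_j$ remains nonempty after the toggle. This is exactly where the affine circuit condition --- $\bigcap_{i\in C}H_i\neq\emptyset$ together with $r(\bigcap_{i\in C}H_i)=|C|-1=r(\bigcap_{i\in C\setminus\{e\}}H_i)$ --- enters: it forces $\bigcap_{i\in C\setminus\{e\}}H_i=\bigcap_{i\in C}H_i$, so intersecting with $\bigcap_{j\in J\setminus C}H_j$ yields the required invariance of $\bigcap_{j\in J}H_j$ under $\varphi$. Working this out carefully, together with the uniqueness of $\sigma(J)$ afforded by the total order $\prec$, constitutes the technical heart of the proof.
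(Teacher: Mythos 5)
The paper does not actually prove Theorem~\ref{ABC}: it is imported verbatim from Orlik--Terao (the affine NBC sets are their ``$\chi$-independent'' sets), where the result is established by deletion--restriction induction. So your proposal is necessarily a different route, and it is a sound one: it is the Whitney-expansion-plus-sign-reversing-involution proof (in the style of Whitney and Blass--Sagan) transported to the affine setting. The three ingredients you isolate are the right ones, and you have correctly identified and resolved the only genuinely affine difficulty, namely step (c): since an affine circuit $C$ satisfies $r(\bigcap_{i\in C}H_i)=r(\bigcap_{i\in C\setminus\{e\}}H_i)=|C|-1$ with both intersections nonempty, the two intersections coincide, so toggling $e=\max_\prec C$ leaves $\bigcap_{j\in J}H_j$ (hence both nonemptiness and the exponent of $t$) unchanged. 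The remaining verifications go through as you expect: (a) a set in class (III) contains a minimal affine dependent subset, which is an affine circuit because all of its subsets inherit nonempty intersection, while a set in class (II) supplies the top of its broken circuit; (b) minimality of $\sigma(J)$ in $\prec$ gives $\sigma(\varphi(J))=\sigma(J)$ because a witness circuit for any $f\prec\sigma(J)$ relative to $\varphi(J)$ cannot contain $\sigma(J)$ and hence would already witness $f$ for $J$; and the identity $\mu(V,X)=\sum_{J:\,\bigcap_{j\in J}H_j=X}(-1)^{|J|}$ follows from the crosscut theorem applied to the central subarrangement $\mathcal{A}_X$ (note that $\bigcap_{j\in J}H_j=X$ forces every $H_j$ to contain $X$, so the grouping is consistent). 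Two small points you should state explicitly when writing this up: that $\varphi$ maps $\mathrm{(II)}\cup\mathrm{(III)}$ into itself (immediate from $\sigma(\varphi(J))=\sigma(J)$ existing), and that class (I) sets are automatically affine independent of size at most $r$, so their contribution is exactly $\sum_k(-1)^ka_kt^{n-k}$. What your approach buys over the cited one is a self-contained combinatorial proof that avoids the deletion--restriction induction and makes transparent exactly where the affine circuit axiom is used.
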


Given an ${\mathbb F}$-represented matroid $(M,\rho)$ with the ground set $[m]$ and the representation $\rho(i)={\bm\rho}_i$ for $i\in[m]$, let $A$ denote the matrix whose rows are the vectors
${\bm\rho}_1,\ldots,{\bm\rho}_m$. For each subset $J\subseteq[m]$,
let $[A\,|\,J]$ denote the matrix whose rows are those rows of $A$
having the row indices in $J$. For each vector ${\bm g}\in{\Bbb
F}^m$, we define
\begin{equation*}
\mathcal{J}_{\bm g}=\{J\subseteq[m]: [A\res J]\,{\bm x} =[{\bm g}
\res J]\;\mbox{is consistent}\}.
\end{equation*}
We assume that $\mathcal{J}_{\bm g}$ contains the empty set
$\emptyset$. Then $\mathcal{J}_{\bm g}$ is a lower order ideal of
the Boolean lattice $(2^{[m]},\subseteq)$. Recall the hyperplane arrangements $\mathcal{A}_{\rho,{\bm g}}$ defined by
\eqref{Translation-Eq}, where ${\bm g}\in{\Bbb F}^m$. By definition
of $\mathcal{J}_{\bm g}$ and by affine dependence and affine
independence,
\begin{equation}\label{J-Union}
\mathcal{J}_{\bm g}=\{\mbox{affine dependent sets}\} \cup
\{\mbox{affine independent sets}\}
\end{equation}
with respect to $\mathcal{A}_{\rho,{\bm g}}$. Let $\mathscr{C}_{\bm g}=\mathcal{J}_{\bm
g}\cap\mathscr{C}(M)$, where ${\bm g}\in{\Bbb F}^m$. We claim that
\begin{equation*}\label{C-Union}
\mathscr{C}_{\bm g}=\{\mbox{affine circuits with respect to
$\mathcal{A}_{\rho,{\bm g}}$}\}.
\end{equation*}
Note that (i) a subset $J$ of $[m]$ is affine independent with
respect to $\mathcal{A}_{\rho,{\bm g}}$ if and only if $J$ is independent
in $M$; (ii) any affine dependent subset with respect to
$\mathcal{A}_{\rho,{\bm g}}$ is dependent in $M$ (its converse is
not necessarily true). Let $J\in \mathscr{C}_{\bm
g}=\mathcal{J}_{\bm g}\cap\mathscr{C}(M)$. Clearly, $J$
must be affine dependent with respect to $\mathcal{A}_{\rho,{\bm g}}$. For
each $j_0\in J$, since $J\smallsetminus j_0$ is independent in
$M$, then $J\smallsetminus j_0$ is affine independent with
respect to $\mathcal{A}_{\rho,{\bm g}}$ by (i). So $J$ is an affine circuit
with respect to $\mathcal{A}_{\rho,{\bm g}}$. Conversely, let $J$ be an
affine circuit with respect to $\mathcal{A}_{\rho,{\bm g}}$. By definition
$J$ is affine dependent and $J\smallsetminus j_0$ is affine
independent with respect to $\mathcal{A}_{\rho,{\bm g}}$ for all $j_0\in
J$. Then $J\in\mathcal{J}_{\bm g}$ by \eqref{J-Union}, $J$ is
dependent in $M$ by (ii), and $J\smallsetminus j_0$ is
independent by (i) in $M$ for each $j_0\in J$. The latter
two mean that $J\in\mathscr{C}(M)$. Hence  $J\in
\mathscr{C}_{\bm g}$.

\begin{lemma}\label{Circuit-Hyperplane-Characterization}
For each circuit $I\in\mathscr{C}(M)$, the hyperplane
$H_{\sst I}$ defined by \eqref{derivedarrangement} can be
written as
\[
H_{\sst I}=\{{\bm g}\in{\Bbb F}^m: [A\res I]\,{\bm x} =[{\bm g} \res
I]\;\mbox{\rm is consistent}\}.
\]
In other words, a vector ${\bm g}$ belongs to $H_{\sst I}$ if and only if
its restriction $[{\bm g} \res I]$ belongs to the column space $\Col
[A\res I]$ of the submatrix $[A\res I]$.
\end{lemma}
\begin{proof}
Recall the circuit vector ${\bm c}_{\sst I}$ associated with the circuit
$I$. Since $c_i=0$ for all $i\in[m]\smallsetminus I$, then for the
restrictions of both ${\bm c}_{\sst I}$ and a vector ${\bm g}$ to $I$, we
have
\begin{equation}\label{Circuit-Restriction}
{\bm c}_{\sst I}A=[{\bm c}_{\sst I}\res I]\,[A\res I]={\bm 0}, \quad {\bm c}_{\sst I}{\bm g} =[{\bm
c}_{\sst I}\res I]\,[{\bm g} \res I].
\end{equation}
If the system $[A\res I]\,{\bm x} =[{\bm g} \res I]$ is consistent,
then
\[
{\bm c}_{\sst I}{\bm g} =[{\bm c}_{\sst I}\res I]\,[{\bm g} \res I]=[{\bm c}_{\sst I}\res
I]\,[A\res I]\,{\bm x}  = 0.
\]
This means that the vector ${\bm g}$ belongs to $H_{\sst I}$.

Conversely, given ${\bm g}\in H_{\sst I}$, we have $[{\bm c}_{\sst I}\res
I]\,[{\bm g} \res I]={\bm c}_{\sst I}{\bm g} =0$. Then $[{\bm g}\res I]$
belongs to the solution space $\Nul\, [{\bm c}_{\sst I}\res I]$ of the
single equation $[{\bm c}_{\sst I}\res I]\,{\bm x} =0$, and $\Nul\, [{\bm
c}_{\sst I}\res I]$ has dimension $|I|-1$. Since $I$ is a circuit, the
matrix $[A\res I]$ has rank $|I|-1$; then $\Col [A\res I]$ also has
dimension $|I|-1$. Note that the first part of
\eqref{Circuit-Restriction} implies $\Col [A\res I]\subseteq \Nul\,
[{\bm c}_{\sst I}\res I]$. Hence $\Col [A\res I]=\Nul\, [{\bm c}_{\sst I}\res I]$.
This means that $[{\bm g} \res I]\in\Col[A\res I]$.
\end{proof}
Lemma~\ref{Circuit-Hyperplane-Characterization} implies
\begin{equation}\label{CH}
\mathscr{C}_{\bm g}=\{I\in\mathscr{C}(M): {\bm g}\in
H_{\sst I}\}.
\end{equation}
Recall the disjoint decomposition \eqref{DecompositionRho}. For each vector ${\bm g}\in{\Bbb F}^m$, there exists a unique
$X\in L(\mathcal{A}_{\de\rho})$ such that ${\bm g}\in
M(\mathcal{A}_{\de\rho}/X)$; we denote this unique space $X$ associated
with $\bm g$ by $X_{\bm g}$. For convenience of later discussion we
state the following fact:
\begin{equation}\label{I-C_g-X_g}
I\in\mathscr{C}_{\bm g}\; \Leftrightarrow\; {\bm g}\in H_{\sst I}
\;\Leftrightarrow\; X_{\bm g}\subseteq H_{\sst I}.
\end{equation}
The first equivalence is a restatement of \eqref{CH}. For the second
equivalence, note that we have ${\bm g}\in M(\mathcal{A}_{\de\rho}/X_{\bm
g})$ by definition of $X_{\bm g}$, that is,
\[
{\bm g} \in \bigcap_{H_{\sst J}\in\mathcal{A}_{\de\rho},\,X_{\bm g}\subseteq
H_{\sst J}}H_{\sst J}-\bigcup_{H_{\sst J}\in\mathcal{A}_{\de\rho},\,X_{\bm g}\not\subseteq
H_{\sst J}}H_{\sst J}.
\]
Now purely logically, ${\bm g} \in
\bigcap_{H_{\sst J}\in\mathcal{A}_{\de\rho},\,X_{\bm g}\subseteq H_{\sst J}}H_{\sst J}$ means
that if $X_{\bm g}\subseteq H_{\sst I}$ then ${\bm g}\in H_{\sst I}$. And ${\bm
g}\not\in \bigcup_{H_{\sst J}\in\mathcal{A}_{\de\rho},\,X_{\bm g}\not\subseteq
H_{\sst J}}H_{\sst J}$ means that if $X_{\bm g}\not\subseteq H_{\sst I}$ then ${\bm
g}\not\in H_{\sst I}$.

\begin{lemma}\label{Consistency-Minimal-Dependence-Consistency}
A linear system $A{\bm x}={\bm g}$ is consistent if and only if the
subsystems
\[
[A\res I]\,{\bm x}=[{\bm g}\res I]
\]
are consistent for all those $I\subseteq [m]$ that the rows of
$[A\res I]$ form a minimal linearly dependent set.
\end{lemma}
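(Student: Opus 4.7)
\medskip

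\textbf{Proof plan.} The forward direction is immediate: if $\bm{x}_0$ solves $A\bm{x}=\bm{g}$, then it also solves every subsystem $[A\res I]\bm{x}=[\bm{g}\res I]$, in particular for those $I$ indexing a minimal linearly dependent set of rows (i.e.\ a circuit of $\mathcal{M}$). So the real content is the converse.

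For the converse, the plan is to translate the hypothesis into a statement about the left null space of $A$ and then invoke the fundamental relation $\Col(A)=(\Nul A^{\!\top})^\perp$. First, I would apply Lemma~\ref{Circuit-Hyperplane-Characterization} circuit by circuit: the assumption that $[A\res I]\bm{x}=[\bm{g}\res I]$ is consistent for every circuit $I\in\mathscr{C}(\mathcal{M})$ is equivalent to $\bm{g}\in H_I$ for every $I$, i.e.\ $\bm{c}_I\bm{g}=0$ for every circuit vector $\bm{c}_I$. Thus the hypothesis says $\bm{g}$ is annihilated by every circuit vector.

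The key step is then to observe that the circuit vectors $\{\bm{c}_I:I\in\mathscr{C}(\mathcal{M})\}$ span the left null space $\Nul A^{\!\top}=\{\bm{c}\in{\Bbb F}^m:\sum_{i=1}^m c_i\bm{\rho}_i=0\}$. This is the standard matroid-theoretic fact that the space of linear dependencies among a finite set of vectors is generated by its circuit dependencies; I would prove it by induction on $|\mathrm{supp}(\bm{c})|$: take any nonzero $\bm{c}\in\Nul A^{\!\top}$, let $J=\mathrm{supp}(\bm{c})$, choose any circuit $I\subseteq J$, and subtract an appropriate scalar multiple of $\bm{c}_I$ from $\bm{c}$ to eliminate at least one coordinate, reducing the support strictly and iterating.

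Granted this spanning statement, the conclusion is immediate: $\bm{c}_I\bm{g}=0$ for all circuits $I$ forces $\bm{c}\bm{g}=0$ for all $\bm{c}\in\Nul A^{\!\top}$, so $\bm{g}\in(\Nul A^{\!\top})^\perp=\Col A$, whence $A\bm{x}=\bm{g}$ is consistent. The only non-routine obstacle is the spanning claim for circuit vectors, and that is handled by the support-reduction argument just described; everything else is bookkeeping through Lemma~\ref{Circuit-Hyperplane-Characterization}.
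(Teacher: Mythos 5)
Your argument is correct, but it follows a genuinely different route from the paper's. The paper proves sufficiency by a direct rank computation: it fixes a maximal independent set of rows indexed by $J$, and for each $i\in[m]\smallsetminus J$ uses the fundamental circuit $J_i\cup i$ (with $J_i\subseteq J$) together with the hypothesis to conclude that the $i$th row of the augmented matrix $[A,{\bm g}]$ lies in the row space of $[A\res J,{\bm g}\res J]$; since the rows of $[A\res J]$ are independent, the subsystem on $J$ is automatically consistent and hence so is the whole system. You instead pass to duality: by Lemma~\ref{Circuit-Hyperplane-Characterization} the hypothesis becomes ${\bm c}_I{\bm g}=0$ for every circuit $I$, you show that the circuit vectors span the left null space of $A$, and conclude ${\bm g}\in(\Nul A^{\top})^{\perp}=\Col A$. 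Both arguments are sound. Your route carries two extra obligations, both of which you discharge or could easily discharge: the spanning claim (your support-reduction induction works, since the support of a nonzero left-null vector is a dependent set and hence contains a circuit, and subtracting the right multiple of that circuit vector strictly shrinks the support), and the identity $(\Nul A^{\top})^{\perp}=\Col A$, which holds over an arbitrary field because the standard bilinear pairing on ${\Bbb F}^m$ is nondegenerate. The paper's rank argument is more elementary and self-contained, and notably uses only the fundamental circuits of a single basis; your version is more conceptual and makes explicit the pleasant reformulation that the lemma, combined with Lemma~\ref{Circuit-Hyperplane-Characterization}, amounts to the equality $\Col A=\bigcap_{I\in\mathscr{C}(\mathcal{M})}H_I$ inside ${\Bbb F}^m$.
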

\begin{proof}
The necessity is trivial. For sufficiency, recall that $A{\bm
x}={\bm g}$ is consistent if and only if its coefficient matrix $A$
and its augmented matrix $[A,{\bm g}]$ have the same rank. We choose a maximal linearly independent set of rows of $A$ with row
index set $J$. Then $\rank [A\res J]=|J|=\rank A$, and the rows of
$[A\res J\cup i]$ are linearly dependent for each
$i\in[m]\smallsetminus J$. Thus there exists a minimal row index set
$J_i\subseteq J$ such that the rows of $[A\res J_i\cup i]$ are
linearly dependent, that is, the set of rows of $[A\res J_i\cup i]$
is a minimal linearly dependent set. So the system $[A\res J_i\cup
i]\,{\bm x}=[{\bm g}\res J_i\cup i]$ is consistent by given
conditions. This means that
\[
\rank[A\res J_i\cup i]=\rank[A\res J_i\cup i,{\bm g}\res J_i\cup i].
\]
Since $\rank [A\res J_i\cup i]=\rank [A\res J_i]$, we obtain $\rank
[A\res J_i]=\rank [A\res J_i\cup i,{\bm g}\res J_i\cup i]$. Now we
have
\[
\rank [A\res J_i,{\bm g}\res J_i] \leq \rank [A\res J_i\cup i, {\bm
g}\res J_i\cup i]=\rank [A\res J_i] \leq \rank [A\res J_i, {\bm
g}\res J_i].
\]
It follows that
\[
\rank [A\res J_i,{\bm g}\res J_i] = \rank [A\res J_i\cup i, {\bm
g}\res J_i\cup i].
\]
This means that the $i$th row of $[A,{\bm g}]$ is a linear
combination of the rows of $[A\res {J_i},{\bm g}\res {J_i}]$, and of
course a linear combination of the rows of $[A\res J,{\bm g}\res
J]$, where $i\in[m]\smallsetminus J$. Since the rows of $[A\res J]$
are linearly independent, the system $[A\res J]{\bm x}=[{\bm g}\res
J]$ is automatically consistent.  We see that the system $A{\bm
x}={\bm g}$ is consistent.
\end{proof}

\begin{corollary}\label{CJ}
Let ${\bm g},{\bm h}\in{\Bbb F}^m$. Then $\mathcal{J}_{\bm
g}\subseteq \mathcal{J}_{\bm h}$ if and only if  $\mathscr{C}_{\bm
g}\subseteq \mathscr{C}_{\bm h}$. In particular, $\mathcal{J}_{\bm
g}= \mathcal{J}_{\bm h}$ if and only if  $\mathscr{C}_{\bm g}=
\mathscr{C}_{\bm h}$.
\end{corollary}
\begin{proof}
The necessity is trivial. For sufficiency, given a subset
$J\subseteq [m]$. By
Lemma~\ref{Consistency-Minimal-Dependence-Consistency}, the system
$[A\res J]\,{\bm x} =[{\bm g} \res J]$ is consistent if and only if
$[A\res I]\,{\bm x} =[{\bm g} \res I]$ is consistent for all those
$I\subseteq J$ such that the rows of $[A\res I]$ form a minimal
linearly dependent set, that is, if and only if
$I\in\mathscr{C}_{\bm g}$ whenever $I\in\mathscr{C}(M)$
and $I\subseteq J$. By definitions of $\mathcal{J}_{\bm g}$ and
$\mathscr{C}_{\bm g}$,
%$J\in\mathcal{J}_{\bm g}$ is equivalent to $I\in\mathscr{C}_{\bm g}$
%for all circuits $I$ of $M$ with $I\subseteq J$,
\[
\mathcal{J}_{\bm g}=\{J\subseteq[m]: I\in\mathscr{C}_{\bm
g}\;\mbox{for all}\;
I\in\mathscr{C}(M)\;\mbox{with}\;I\subseteq J\}.
\]
Since $\mathscr{C}_{\bm g}\subseteq \mathscr{C}_{\bm h}$, it is
clear that $\mathcal{J}_{\bm g}\subseteq \mathcal{J}_{\bm h}$.
\end{proof}
\begin{proof}[Proof of Theorem \ref{GCT}]
By definition of $X_{\bm h}$, ${\bm h}\in M(\mathcal{A}_{\de\rho}/X_{\bm g})$ is  equivalent to $X_{\bm h}=X_{\bm g}$. Since
$X=\bigcap\{H_{\sst I}\in\mathcal{A}_{\de\rho}: X\subseteq H_{\sst I}\}$ for each $X\in
L(\mathcal{A}_{\de\rho})$, then $X_{\bm g}= X_{\bm h}$ is equivalent to
\begin{equation}\label{X_g-X_h-Containment}
\{H_{\sst I}\in\mathcal{A}_{\de\rho}: X_{\bm g}\subseteq
H_{\sst I}\}=\{H_{\sst I}\in\mathcal{A}_{\de\rho}: X_{\bm h}\subseteq H_{\sst I}\}.
\end{equation}
Applying \eqref{I-C_g-X_g}, then \eqref{X_g-X_h-Containment} is
equivalent to
\begin{equation*}
\{H_{\sst I}\in\mathcal{A}_{\de\rho}: {\bm g}\in H_{\sst I}\}=\{H_{\sst I}\in\mathcal{A}_{\de\rho}:
{\bm h}\in H_{\sst I}\},
\end{equation*}
which is further equivalent to
\begin{equation}\label{I-C}
\{I\in\mathscr{C}(M): {\bm g}\in
H_{\sst I}\}=\{I\in\mathscr{C}(M): {\bm h}\in H_{\sst I}\}.
\end{equation}
Applying \eqref{CH}, we see that \eqref{I-C} is equivalent to
$\mathscr{C}_{\bm g}=\mathscr{C}_{\bm h}$, which is further
equivalent to $\mathcal{J}_{\bm g}=\mathcal{J}_{\bm h}$ by
Corollary~\ref{CJ}. Summarizing the above discussion, we have
\[
X_{\bm g}=X_{\bm h}\; \Leftrightarrow\; \mathscr{C}_{\bm
g}=\mathscr{C}_{\bm h} \; \Leftrightarrow\; \mathcal{J}_{\bm
g}=\mathcal{J}_{\bm h}.
\]
Note that
\[
L(\mathcal{A}_{\rho,{\bm g}})=\left\{\Cap_{i\in J}H_{{\bm \rho}_i,g_i}\mid J\in \mathcal{J}_{\bm
g} \right\},\quad\quad L(\mathcal{A}_{\rho,{\bm h}})=\left\{\Cap_{i\in J}H_{{\bm \rho}_i,h_i}\mid J\in \mathcal{J}_{\bm
h} \right\},
\]
and $\dim(\cap_{i\in J}H_{{\bm \rho}_i,g_i})=\dim(\cap_{i\in J}H_{{\bm \rho}_i,h_i})$ if $J\in \mathcal{J}_{\bm g},\mathcal{J}_{\bm h}$.
This means that ${\bm h}\in M(\mathcal{A}_{\de\rho}/X_{\bm g})$ implies $L(\mathcal{A}_{\rho,{\bm g}})\cong L(\mathcal{A}_{\rho,{\bm h}})$ and $\chi(\mathcal{A}_{\rho,{\bm g}},t)=\chi(\mathcal{A}_{\rho,{\bm h}},t)$.
\end{proof}
\begin{proof}[Proof of Theorem \ref{comparison}]
Write $X=X_{\bm g}$ and $Y=X_{\bm h}$ for some ${\bm g},{\bm
h}\in{\Bbb F}^m$, i.e., ${\bm g}\in M(\mathcal{A}_{\de\rho}/X)$ and ${\bm h}\in M(\mathcal{A}_{\de\rho}/Y)$. Applying \eqref{CH} and \eqref{I-C_g-X_g}, we have
\[
\mathscr{C}_{\bm g} = \{I\in\mathscr{C}(M): {\bm g}\in
H_{\sst I}\} = \{I\in\mathscr{C}(M): X_{\bm g}\subseteq H_{\sst I}\}.
\]
Since $X_{\bm g}\subseteq X_{\bm h}$, it follows that
$\mathscr{C}_{\bm h}\subseteq \mathscr{C}_{\bm g}$. Let
$B\mathscr{C}_{\bm g}$ denote the set of (affine) broken circuits
obtained from the (affine) circuits in $\mathscr{C}_{\bm g}$. Then
$B\mathscr{C}_{\bm h}\subseteq B\mathscr{C}_{\bm g}$. Let
$NB\mathscr{C}_{\bm g}$ denote the set of all affine NBC sets with
respect to $\mathcal{A}_{\rho,{\bm g}}$. Let $J\in NB\mathscr{C}_{\bm g}$.
Then $J$ is affine independent with respect to $\mathcal{A}_{\rho,{\bm g}}$
and $J$ does not contain elements of $B\mathscr{C}_{\bm g}$. Since
affine independence is equivalent to independence in $M$,
we see that $J$ is affine independent with respect to
$\mathcal{A}_{\rho,{\bm h}}$ and of course $J$ does not contain elements of
$B\mathscr{C}_{\bm h}$, that is, $J\in NB\mathscr{C}_{\bm h}$. Thus
$NB\mathscr{C}_{\bm g}\subseteq NB\mathscr{C}_{\bm h}$. Recall the
Affine Broken Circuit Theorem~\ref{ABC} that $a_k(X)$ equals
the number of $k$-subsets of $NB\mathscr{C}_{\bm g}$. We see that
$a_k(X)\leq a_k(Y)$,$0\leq k\leq r$.
\end{proof}
It remains to prove Theorem \ref{decomposition} in this section. Roughly speaking, the characteristic polynomial of a hyperplane arrangement measures the size of its complement. Let $V$ be a vector space over an infinite field ${\Bbb F}$, and let
$\mathcal{B}(V)$ be the Boolean algebra generated by affine
subspaces through intersection, union, and complement finitely many
times. A {\bf valuation} on $\mathcal{B}(V)$ is a map $\rho$ from
$\mathcal{B}(V)$ to an abelian group such that
$\rho(\emptyset)=0$ and $\rho(A\cup
B)=\rho(A)+\rho(B)-\rho(A\cap B)$ for
$A,B\in\mathcal{B}(V)$. It is known \cite{BFC,Ehrebborg} that there
exists a unique (translation-invariant) valuation
$\nu:\mathcal{B}(V)\rightarrow{\Bbb Z}[t]$ such that $\nu(W)=t^{\dim
W}$ for all affine subspaces $W$ of $V$. Given a hyperplane
arrangement $\mathcal{A}$ on the vector space $V$. Note that the complement
$M(\mathcal{A})$ is a member of $\mathcal{B}(V)$ and its valuation
under $\nu$ is the characteristic polynomial of $\mathcal{A}$, that
is,
\[
\chi(\mathcal{A},t)=\nu(M(\mathcal{A})).
\]
The valuation $\nu$ defines an integral
\[
\int f{\rm d}\nu = \sum_{i=1}^\ell c_i \nu({X_i})
\]
for each simple function $f=\sum_{i=1}^\ell c_i 1_{X_i}$, where
$X_i\in\mathcal{B}(V)$ and $1_{X_i}$ is the indicator function of
$X_i$. The following proposition is a Fubini-type theorem.

\begin{proposition}[Ehrenborg and Readdy \cite{Ehrebborg}]\label{proposition-fubini-type}
Let $\nu_1,\nu_2$ and $\nu$ be the unique valuations on vector
spaces $V_1,V_2$ and $V_1\times V_2$, respectively. Let $f(x,y)$ be a
simple function on $V_1\times V_2$. Then $f_x(y)$ is a simple
function on $V_2$ for each $x\in V_1$, and $\int f_x(y){\rm d}\nu_2$
is a simple function on $V_1$. Moreover,
\[
\int f(x,y){\rm d}\nu=\int\int f_x(y){\rm d}\nu_2{\rm d}\nu_1.
\]
\end{proposition}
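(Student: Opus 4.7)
The plan is to reduce the identity, by linearity of both sides in $f$, first to the case $f=1_W$ for a single $W\in\mathcal{B}(V_1\times V_2)$ and then further to the case where $W$ is an affine subspace, at which point both sides can be computed explicitly and shown to equal $t^{\dim W}$. For the first reduction, I would repeatedly apply the pointwise identities $1_{A\cup B}=1_A+1_B-1_{A\cap B}$ and $1_{A^c}=1-1_A$, together with the fact that the intersection of two affine subspaces is again affine (or empty), to write $1_W=\sum_j d_j\,1_{U_j}$ as a $\mathbb{Z}$-linear combination of indicators of affine subspaces $U_j\subseteq V_1\times V_2$. Because the slice operation $g\mapsto g_x$ commutes with pointwise sums and products, one then has $(1_W)_x=\sum_j d_j\,1_{(U_j)_x}$, so the simplicity of $f_x$ on $V_2$ reduces to the affine-subspace case.

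It remains to verify the Fubini identity when $f=1_U$ for an affine subspace $U\subseteq V_1\times V_2$. Let $\pi\colon V_1\times V_2\to V_1$ denote the canonical projection; then $\pi(U)$ is an affine subspace of $V_1$, and for each $x\in\pi(U)$ the fibre $U_x=\{y\in V_2:(x,y)\in U\}$ is an affine subspace of $V_2$ of constant dimension $d=\dim U-\dim\pi(U)$, while $U_x=\emptyset$ for $x\notin\pi(U)$. Consequently
\[
\int (1_U)_x(y)\,{\rm d}\nu_2 \;=\; t^d\cdot 1_{\pi(U)}(x),
\]
which is a simple function of $x$ on $V_1$. Integrating once more against $\nu_1$ yields $t^d\cdot t^{\dim\pi(U)}=t^{\dim U}=\nu(U)=\int 1_U\,{\rm d}\nu$, which is the required identity. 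Writing the general simple function $f$ in terms of its affine-subspace decomposition and using linearity of both iterated integrals then completes the proof, and along the way shows that $x\mapsto\int f_x\,{\rm d}\nu_2$ is simple on $V_1$.

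The main obstacle I expect is justifying that the decomposition $1_W=\sum_j d_j\,1_{U_j}$ is a genuine pointwise identity of functions on $V_1\times V_2$, not merely an identity valid after applying the valuation $\nu$; without this, one cannot legitimately interchange slicing, summation, and integration. This is established by induction on the number of Boolean operations used to construct $W$ from affine subspaces, with the pointwise identities above providing the inductive step. Once this bookkeeping is in place, the affine-subspace case handled directly propagates through the decomposition and yields every assertion of the proposition at once.
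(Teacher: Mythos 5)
The paper does not prove this proposition; it is quoted from Ehrenborg--Readdy \cite{Ehrebborg} and used as a black box, so there is no in-paper argument to compare against. Your proof is correct and is essentially the standard argument for this result. The two load-bearing points are both handled properly: (1) the reduction of $1_W$, $W\in\mathcal{B}(V_1\times V_2)$, to a genuinely \emph{pointwise} $\mathbb{Z}$-linear combination $\sum_j d_j 1_{U_j}$ of indicators of affine subspaces, by induction on the Boolean construction using $1_{A\cap B}=1_A1_B$, $1_{A\cup B}=1_A+1_B-1_A1_B$, $1_{A^c}=1_V-1_A$ and closure of affine subspaces under intersection (you are right to flag that this must be a pointwise identity, since slicing is applied before integrating); and (2) the fiber computation for an affine subspace $U$: writing $U=u_0+U_0$, every nonempty fiber $U_x$ is a translate of $U_0\cap(\{0\}\times V_2)$, so it has constant dimension $d=\dim U-\dim\pi(U)$ by rank--nullity, giving $\int(1_U)_x\,{\rm d}\nu_2=t^d 1_{\pi(U)}(x)$ and then $t^{\dim U}=\nu(U)$ after the second integration. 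The only point you take on faith is that $\int f\,{\rm d}\nu$ is well defined, i.e.\ independent of the chosen representation $f=\sum_i c_i 1_{X_i}$ (this is needed to equate the value computed from your affine decomposition with the value computed from the original representation); but this is part of the construction of $\nu$ and its integral that the paper itself imports from \cite{BFC,Ehrebborg}, so it is reasonable to cite rather than reprove it. With that caveat noted, the argument is complete.
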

\begin{proof}[Proof of Theorem \ref{decomposition}] Consider the hyperplane arrangement $\tilde{\mathcal{A}}$ in ${\Bbb
F}^n\times {\Bbb F}^m$ with the hyperplanes
\[
{\bm\rho}_i{\bm x} =y_i,\quad 1\leq i\leq m,
\]
where $({\bm x},{\bm y})$ is the vector variable with ${\bm
x}=(x_1,\ldots,x_n)$ and ${\bm y}=(y_1,\ldots,y_m)$. The system of
the $m$ defining equations are linearly independent. The
intersection semi-lattice of $\tilde{\mathcal{A}}$ is isomorphic to
the Boolean lattice $(2^{[m]},\subseteq)$. So
$\chi(\tilde{\mathcal{A}},t)=t^n(t-1)^m$. On the other hand, since ${\Bbb F}^m=\bigsqcup_{X\in
L(\mathcal{A}_{\de\rho})} M(\mathcal{A}_{\de\rho}/X)$ by \eqref{DecompositionRho}, we have
\[
M(\tilde{\mathcal{A}}) = \bigsqcup_{X\in L(\mathcal{A}_{\de\rho})}
M(\tilde{\mathcal{A}})\cap ({\Bbb F}^n\times M(\mathcal{A}_{\de\rho}/X)).
\]
Let $\Omega(X)= M(\tilde{\mathcal{A}})\cap ({\Bbb F}^n\times
M(\mathcal{A}_{\de\rho}/X))$, which is a member of the Boolean algebra
$\mathcal{B}({\Bbb F}^{n+m})$. Then the indicator function
$1_{\Omega(X)}$ is a simple function with respect to
$\mathcal{B}({\Bbb F}^{n+m})$. Note that $({\bm x}, {\bm g})\in \Omega(X)$ means ${\bm g}\in M(\mathcal{A}_{\de\rho}/X)$ and ${\bm\rho}_i{\bm x} \ne g_i,\; 1\leq i\leq m$, i.e.,
\[
\Omega(X)=\bigsqcup_{{\bm g}\in M(\mathcal{A}_{\de\rho}/X)}
M(\mathcal{A}_{\rho,{\bm g}})\times\{{\bm g}\}.
\]
Whenever $\Bbb F$ is an infinite field, note that
$\nu_1(M(\mathcal{A}_{\rho,{\bm g}}))=\chi(\mathcal{A}_{\rho,{\bm
g}},t)=\chi(X,t)$ for all ${\bm g}\in M(\mathcal{A}_{\de\rho}/X)$.
Applying Proposition~\ref{proposition-fubini-type},
\[
\int 1_{\Omega(X)}{\rm d}\nu = \int\int 1_{\Omega(X)}{\rm
d}\nu_1{\rm d}\nu_2 = \chi(X,t)\, \chi(\mathcal{A}_{\de\rho}/X,t).
\]
Hence
\[
\chi(M(\tilde{\mathcal{A}},t))=\sum_{X\in L(\mathcal{A}_{\de\rho})}
\chi(X,t) \chi(\mathcal{A}_{\de\rho}/X,t).
\]
Whenever ${\Bbb F}$ is a finite field of $q$ elements, the
valuations $\nu_1,\nu_2$ and $\nu$ are understood as counting
measures, and the variable $t$ is replaced by the number $q$.
\end{proof}

\section{Applications}

Consider $M=U_{1,n}$, the uniform matroid with the ground set $[n]$ and of rank $1$. For any $\Bbb{F}$- representation  $\rho$ of $M$, we have $\de M\cong M(K_n)$, the cycle matroid of the complete graph $K_n$. Without loss of generality, assume  the representation $\rho: [n]\to \Bbb{F}$ with $\rho(i)={\bm \rho}_i=1$.  Its derived arrangement $\mathcal{A}_{\de\rho}$ is
\[
\mathcal{A}_{\de\rho}=\{H_{i,j}\mid 1\le i<j\le n\}\quad{\rm where}\; H_{i,j}: x_i=x_j.
\]
We say $P=\{S_1,\ldots, S_k\}$ a partition of $[n]$ if $S_i\cap S_j=\emptyset$ and $[n]=\Cup_{i=1}^kS_i$, and denote by $l(P)=k$ the number of parts in $P$. For two partitions $P=\{S_1,\ldots, S_k\}$ and $P'=\{S'_1,\ldots, S'_{k'}\}$ of [n], we say $P'$ is a refinement of $P$ if for each $i\in [k']$, $S'_i$ is a subset of $S_j$ for some $j\in [k]$. Let $(\Par(n), \le)$ be the poset consisting of all partitions of $[n]$ and ordered by refinements. It is obvious that
\[
L(\mathcal{A}_{\de \rho})\cong L(\de M)= L(M(K_n))\cong \Par(n).
\]
More precisely, for each $X\in L(\mathcal{A}_{\de \rho})$,  there is a unique partition $P_{\sst X}=\{S_1,\ldots, S_k\}$ such that
\[
X=\{(x_1\ldots, x_n)\in \Bbb{F}^m\mid x_i=x_{i'} {\rm ~whenever~} i,i'\in S_j {\rm ~for ~some~} j\in [k] \}.
\]
which follows
\[
\chi(\mathcal{A}_{\de \rho}/X, t)=t(t-1)\cdots (t-k+1).
\]
For any vector ${\bm g}=(g_1,\ldots,g_n)\in {\Bbb F}^m$, the parallel translate $\mathcal{A}_{\rho,{\bm g}}$ is
\[
\mathcal{A}_{\rho,{\bm g}}=\{H_{{\bm\rho}_i,g_i}\mid 1\le i\le n\},\quad{\rm where}\;
H_{{\bm\rho}_i,g_i}: {\bm\rho}_i x=x=g_i,
\]
and
\[
\chi(\mathcal{A}_{\rho,{\bm g}},t)=t-\delta_{\bm g},
\]
where  $\delta_{\bm g}=\#\{g_i\mid i\in [n]\}$ is the number of distinct entries of ${\bm g}$. If $X\in L(\mathcal{A}_{\de \rho})$ and $P_{\sst X}=\{S_1,\ldots, S_k\}$, then ${\bm g}\in M(\mathcal{A}_{\de \rho}/X)$ implies  $\delta_{\bm g}=k$ and $\chi(X,t)=t-k$. From Theorem \ref{decomposition}, we immediately have the following identity
\[
\sum_{P\in {{\rm Par}(n)}}t(t-1)\cdots (t-l(P))=t(t-1)^n.
\]
Note that the Stirling number $S(n, k)$ of the second kind   counts the number of partitions of $[n]$ into $k$ parts. Removing $t$ on both sides and replacing $t$ by $x+1$, we obtain the well known identity  on $S(n, k)$ below
\[
\sum_{k\ge 0}S(n,k)x(x-1)\cdots(x-k+1)=x^n.
\]

\end{document}